\documentclass[12pt,twoside, a4paper]{amsart}

\usepackage{mathptmx}
\usepackage{amssymb}
\usepackage{eucal}
\usepackage{amsxtra}
\usepackage{verbatim}
\usepackage{enumerate}
\usepackage[english]{babel}
\usepackage{bm}

\usepackage{setspace}
\usepackage[body={17cm,21cm}]{geometry}
\usepackage[T1]{fontenc}
\usepackage{graphicx}
\usepackage{mathrsfs}
\usepackage{amscd,latexsym,amsthm,amsfonts,amssymb,amsmath,amsxtra}
\usepackage[colorlinks, urlcolor=blue,  citecolor=blue]{hyperref}
\usepackage{enumerate}
\usepackage[all]{xy}%
\providecommand{\U}[1]{\protect\rule{.1in}{.1in}}
\RequirePackage{amsmath}
\RequirePackage{amssymb}
\usepackage{comment}

\theoremstyle{plain}

\newtheorem{theorem}{Theorem}
\newtheorem{proposition}[theorem]{Proposition}
\newtheorem{lemma}[theorem]{Lemma}
\newtheorem{corollary}[theorem]{Corollary}

\newtheorem{theorem?}{Theorem(?)} [section]
\newtheorem{proposition?}[theorem]{Proposition(?)}
\newtheorem{lemma?}[theorem]{Lemma(?)}
\newtheorem{corollary?}[theorem]{Corollary(?)}

\newtheorem*{theorem*}{Theorem}
\newtheorem*{proposition*}{Proposition}
\newtheorem*{lemma*}{Lemma}
\newtheorem*{corollary*}{Corollary}
\newtheorem*{question*}{Question}
\newtheorem*{conjecture*}{Conjecture}
\newtheorem*{claim*}{Claim}

\newtheorem*{introtheorem*}{Theorem}
\newtheorem*{introproposition*}{Proposition}
\newtheorem*{introlemma*}{Lemma}
\newtheorem*{introcorollary*}{Corollary}

\theoremstyle{definition}

\newtheorem*{definition*}{Definition}
\newtheorem*{example*}{Example}

\newtheorem{question}[theorem]{Question}

\theoremstyle{remark}
\newtheorem{remark}[theorem]{Remark}

\newtheorem*{remark*}{Remark}

\numberwithin{equation}{section}
\numberwithin{theorem}{section}

\DeclareSymbolFont{rsfs}{U}{rsfs}{m}{n}
\DeclareSymbolFontAlphabet{\mathcal}{rsfs}

\newcommand{\ZZ}{{\mathbb{Z}}}
\newcommand{\QQ}{{\mathbb{Q}}}

\newcommand{\Gal}{{\rm Gal}}
\newcommand{\Pic}{{\rm Pic}}

\newcommand{\Br}{{\rm Br}}

\newcommand{\Hom}{{\rm Hom}}

\newcommand{\inv}{{\rm inv}}

\newcommand{\Mor}{{\rm Mor}}

\newcommand{\R}{{\textup{R}}}

\def\Z{{\ZZ}}
\def\Q{{\QQ}}

\def\et{{\textup{\'et}}}

\def\fab{{\textup{f-ab}}}
\def\H{{\textup{H}}}

\def\A{\mathbf{A}}

\begin{document}

\title[]
{Remarks on Brauer--Manin obstruction for Weil restrictions}

\author{Sheng Chen}

\address{Sheng Chen \newline School of Mathematics and Statistics, \newline  Changchun University of Science and Technology; \newline 7089 Weixing Road,  Changchun , China}

\email{chenshen1991@cust.edu.cn}

\author{Kai Huang}

\address{Kai Huang \newline School of Mathematical Sciences, \newline  University of Science and Technology of China; \newline 96 Jinzhai Road, 230026 Hefei, China}

\email{hk0708@mail.ustc.edu.cn}
\date{\today}
\keywords{Brauer--Manin obstruction, Weil restriction.}
\subjclass[2010]{Primary: 11G35, 14G05}
\thanks{Corresponding author: Kai Huang; hk0708@mail.ustc.edu.cn; \newline School of Mathematical Sciences, University of Science and Technology of China; \newline 96 Jinzhai Road, 230026 Hefei, China}

\begin{abstract}
Given a finite extension $K/k$ of number fields and a smooth quasi-projective variety $X$ over $K$.
If $\pi_{1}^{\textup{ab}}(X\times_{K}\overline{k})$ is trivial, we prove that there is a natural identification between Brauer--Manin sets of $X$ and its Weil restriction $\R_{K/k}X$.
If $X$ is projective and $\Pic(X\times_{K}\overline{k})$ is a torsion-free abelian group, we prove that there is a natural identification between algebraic Brauer--Manin sets of $X$ and $\R_{K/k}X$.
\end{abstract}


\maketitle
\section{Introduction}
Let $k$ be a number field. We denote by $\Omega_k$ the set of places of $k$, by $k_v$ the completion of $k$ at $v$ and by ${\mathbf A}_k$ the ring of adeles of $k$.
Let $V$ be a smooth quasi-projective variety over $k$. The set $V(k)$ of $k$-rational points of $V$ can be viewed as a subset of the set of adelic points $V{({\mathbf A}_k)}$.

We say that $V$ satisfies Hasse principle if $V{({\mathbf A}_k)}\ne \emptyset $ implies $ V(k)\ne \emptyset$. In general Hasse principle fails for many varieties. After Manin \cite{Manin}, one may attempt to explain such failures by considering the Brauer--Manin set $V{({\mathbf A}_k)}^{\Br}$, defined as the set of elements of $V({\mathbf A}_k)$ that are orthogonal to the Brauer group $\Br(V)$ of $V$ with respect to the Brauer--Manin pairing. The global reciprocity law implies that $V(k)\subset V{({\mathbf A}_k)}^{\Br}$ (see \cite[\S 5.2]{Sko}). We say that there is a Brauer--Manin obstruction to Hasse principle for $V$ if $V({\mathbf A}_k)\ne \emptyset$ but $V{({\mathbf A}_k)}^{\Br}=\emptyset$.

Now for a finite extension $K$ of a number field $k$ and $X$ a quasi-projective variety over $K$, the {Weil restriction} $\R_{K/k}X$ exists and for any  $k$-scheme $Y$
$$\Mor_k(Y,\R_{K/k}X)=\Mor_K(Y\times_{k}K,X).$$
In particular, we have a natural identification
\begin{equation}\label{Iden}
\Phi: X(\A_K)\buildrel{=}\over\longrightarrow(\R_{K/k}X)(\A_k),
\end{equation}
under which the subset $X(K)$ is identified with $(\R_{K/k}X)(k)$.

In \cite[Remark 5, p. 95]{CTP}, J.-L. Colliot-Th\'el\`ene and B. Poonen ask whether the existences of Brauer--Manin obstruction to Hasse principle on $X$ and on $\R_{K/k}X$ are equivalent to each other. Furthermore, we can ask the following question.
\begin{question}\label{q}
Does the identification $\Phi$ preserve Brauer--Manin sets of $X$ and $\R_{K/k}X$?
\end{question}

By \cite[Corollary 3.2]{CL}, the inclusion $(\R_{K/k}X)(\mathbf{A}_{k})^{\Br} \subset \Phi(X(\mathbf{A}_{K})^{\Br})$ is known. However, the inclusion in the opposite direction remains open.

Similar questions for finite, finite solvable or finite abelian descent obstructions have
been affirmatively answered by M. Stoll in \cite[Proposition 5.15]{Sto}. Recently, Yang Cao and Yongqi Liang have also studied the \'etale Brauer--Manin set and obtained a positive answer (see [1, Theorem 1.1]).

In this note we give a positive answer to Question \ref{q} for varieties with trivial abelianized fundamental groups based on work of \cite{CL}. Our main results are the following.

\begin{theorem}\label{t0}(Theorem \ref{t1})
Let $X$ be a smooth quasi-projective variety over $K$. Assume that $\pi_{1}^{\textup{ab}}(X\times_{K}\overline{k})$ is trivial. Then $\Phi(X(\mathbf{A}_{K})^{\Br})=(\R_{K/k}X)(\mathbf{A}_{k})^{\Br}$.
\end{theorem}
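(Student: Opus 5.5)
Since \cite[Corollary 3.2]{CL} already gives $(\R_{K/k}X)(\mathbf{A}_{k})^{\Br}\subset\Phi(X(\mathbf{A}_{K})^{\Br})$, only the reverse inclusion needs proof. Write $Y=\R_{K/k}X$. The plan is to show that, under the hypothesis on $\pi_1^{\textup{ab}}$, the Brauer group of $Y$ is essentially generated by corestrictions of classes on $X$. The key geometric input is the decomposition
$$Y\times_k\overline{k}\;\cong\;\prod_{\sigma:K\hookrightarrow\overline{k}} X^{\sigma},\qquad X^\sigma=X\times_{K,\sigma}\overline{k}.$$
Here each $X^\sigma$ has trivial $\pi_1^{\textup{ab}}$, since it is isomorphic as a scheme to $X\times_K\overline{k}$.

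\textbf{Step 1: geometric Brauer group via a Künneth formula.} For a product $X_1\times X_2$ of smooth varieties over $\overline{k}$ there is a Künneth-type formula for $\Br$, as in Skorobogatov--Zarhin. The cross terms in that formula are controlled by $\Hom$ between torsion of $\Pic$/Albanese-type data, equivalently by $\H^1_{\et}(X_i,\mu_n)$. Vanishing of $\pi_1^{\textup{ab}}$ gives $\H^1_{\et}(X_i,\ZZ/n)=0$ and hence kills these terms. Iterating, I expect
$$\Br(Y_{\overline{k}})=\bigoplus_\sigma \Br(X^\sigma)=\mathrm{Ind}_{\Gamma_K}^{\Gamma_k}\Br(X_{\overline{k}})$$
as Galois modules. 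The same vanishing yields $\Pic(Y_{\overline{k}})=\bigoplus_\sigma\Pic(X^\sigma)$, and invertible functions modulo constants also form an induced module. By Shapiro's lemma, the algebraic part $\H^1(k,\Pic Y_{\overline{k}})$ is then identified with $\H^1(K,\Pic X_{\overline{k}})$. Likewise, the kernel and cokernel terms in the Hochschild--Serre spectral sequence for $Y$ become those for $X$.

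\textbf{Step 2: comparison of Brauer groups.} Combining the Hochschild--Serre filtrations for $X/K$ and $Y/k$ with Step 1 and Shapiro's lemma, I would show that
$$\Br(X)\to\Br(Y),\qquad \beta\mapsto \mathrm{cores}_{K/k}(p^*\beta)$$
is surjective modulo $\Br_0(Y)$. Here $p: Y_K\to X$ is the canonical morphism given by adjunction. Concretely, every class of $\Br(Y)/\Br_0(Y)$ should be of the form $\mathrm{cores}_{Y_K/Y}(p^*\beta)$ with $\beta\in\Br(X)$. This step is the main obstacle. The spectral sequences only give graded pieces, and one must check that the map is compatible on each filtration level, including the transcendental quotient $\Br(Y)\to\Br(Y_{\overline{k}})^{\Gamma_k}$. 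A possible fallback is the five lemma applied to the filtration, using functoriality of Hochschild--Serre under corestriction.

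\textbf{Step 3: pairing compatibility.} Given $(P_w)\in X(\mathbf{A}_K)^{\Br}$ and an element $\mathrm{cores}(p^*\beta)+\alpha_0$ with $\alpha_0\in\Br_0(Y)$, the class $\alpha_0$ pairs trivially by reciprocity. For each place $v$ of $k$, the local computation gives
$$\inv_v\bigl(\mathrm{cores}(p^*\beta)(\Phi(P)_v)\bigr)=\sum_{w\mid v}\inv_w\bigl(\beta(P_w)\bigr),$$
by the compatibility of local invariants with corestriction. This is the same computation as in \cite{CL}. Summing over $v$ then gives $0$, so $\Phi(P)\in Y(\mathbf{A}_k)^{\Br}$, which is the reverse inclusion.
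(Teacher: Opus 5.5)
Your proof is correct in outline, but it takes a genuinely different route from the paper.

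\textbf{The paper's route.} The paper never computes $\Br(\R_{K/k}X)$. From $\pi_1^{\textup{ab}}(\overline{X})=0$ it gets $\H^1_{\et}(X,G)=\H^1_{\et}(K,G)$ for every finite abelian $G$. Hence every such torsor has a twist admitting a section, and $X(\mathbf{A}_K)^{\fab,\Br}=X(\mathbf{A}_K)^{\Br}$. The missing inclusion is then exactly \cite[Proposition 4.1]{CL}, namely $\Phi(X(\mathbf{A}_K)^{\fab,\Br})\subset(\R_{K/k}X)(\mathbf{A}_k)^{\Br}$. All the Brauer-group work is thus hidden in that black box.

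\textbf{Your route.} You work directly on Brauer groups. The hypothesis kills the Künneth cross terms, so $\Pic$ and $\Br$ of $\overline{\R_{K/k}X}=\prod_\sigma X^\sigma$ are induced modules. Then $\mathrm{cores}\circ p^*$ induces Shapiro isomorphisms on all $E_2^{p,q}$ with $q\geq 1$.

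Your Step 2 worry then disappears. First, $\pi_1^{\textup{ab}}(\overline{X})=0$ forces $\overline{k}[\overline{X}]^*=\overline{k}^*$, since a non-constant unit would give a non-trivial $\mu_n$-torsor. Second, $\H^3(k,\overline{k}^*)=0$. So modulo $\Br_0$ the Hochschild--Serre filtration has only two graded pieces: $\H^1(\,\cdot\,,\Pic)$, and the kernel of $d_2$ on the Galois invariants of the geometric Brauer group. Both are matched isomorphically.

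You therefore get $\Br(X)/\Br_0(X)\cong\Br(\R_{K/k}X)/\Br_0(\R_{K/k}X)$. Step 3 then gives both inclusions at once; the reverse one is the same local computation as \cite[Corollary 3.2]{CL}.

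\textbf{What Step 1 still needs.} Since $X$ is only quasi-projective, you cannot simply quote the projective Künneth formula of Skorobogatov--Zarhin. You need:
\begin{enumerate}[(i)]
\item the Künneth formula for $\H^2(-,\mu_n)$ over $\overline{k}$, valid in characteristic $0$, whose cross terms vanish because $\H^1(X^\sigma,\ZZ/n)=0$;
\item $\Pic(X_1\times X_2)=\Pic X_1\oplus\Pic X_2$. You can get this by passing to a smooth compactification $\widetilde{X}_1$, which still has $\pi_1^{\textup{ab}}=0$ and hence $\Pic^0_{\widetilde{X}_1}=0$, and then removing the boundary divisors $D_i\times X_2$;
\item compatibility of Hochschild--Serre with corestriction along $Y_K\to Y$.
\end{enumerate}

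\textbf{Comparison.} Your route is longer, but it yields a stronger, purely algebraic comparison of Brauer groups. The paper's argument is three lines given \cite{CL}.
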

\begin{theorem}(Theorem \ref{T})
If $X$ is a smooth projective variety over $K$ and $\Pic(X\times_{K}\overline{k})$ is a torsion-free abelian group. Then $\Phi(X(\mathbf{A}_{K})^{\Br_{1}})=(\R_{K/k}X)(\mathbf{A}_{k})^{\Br_{1}}$.
\end{theorem}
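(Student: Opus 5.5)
The plan is to compare the algebraic Brauer groups of $X$ and $Y:=\R_{K/k}X$ through the Hochschild--Serre description $\Br_1/\Br_0\cong \H^1(\,\cdot\,,\Pic)$, and then to transport the pairing along $\Phi$ using corestriction. By \cite[Corollary 3.2]{CL} and its proof, the inclusion $(\R_{K/k}X)(\A_k)^{\Br}\subset\Phi(X(\A_K)^{\Br})$ comes from an explicit map on Brauer groups. Let $p\colon Y\times_k K\to X$ be the counit of the Weil restriction adjunction, and set $N:=\mathrm{cores}_{K/k}\circ p^*\colon \Br(X)\to\Br(Y)$. For $\alpha\in\Br(X)$ and $x\in X(\A_K)$ one has $\langle N\alpha,\Phi(x)\rangle=\langle\alpha,x\rangle$. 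Since $N$ sends $\Br_1(X)$ into $\Br_1(Y)$ and $\Br_0$ into $\Br_0$, it suffices to prove that
\[
\bar N\colon \Br_1(X)/\Br_0(X)\longrightarrow \Br_1(Y)/\Br_0(Y)
\]
is surjective. Then every class in $\Br_1(Y)$ is $N\alpha$ plus a constant, which gives $\Phi(X(\A_K)^{\Br_1})\subset (\R_{K/k}X)(\A_k)^{\Br_1}$. The reverse inclusion follows from the same pairing identity applied to $\Br_1$.

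\emph{Step 1: Picard groups.} Over $\overline{k}$ we have $Y_{\overline k}\cong\prod_{\sigma}X^\sigma_{\overline k}$, with $\sigma$ running over the $k$-embeddings $K\to\overline k$. Each factor is smooth projective, and $\Pic(X_{\overline k})$ is torsion-free. The group $\Pic^0$ of $X_{\overline k}$ is divisible, and if nonzero it is an abelian variety with torsion, so it vanishes. Hence the Künneth-type formula for the Picard group of a product has no correspondence term $\Hom(\mathrm{Alb},\Pic^0)$. This gives $\Pic(Y_{\overline k})=\bigoplus_\sigma\Pic(X^\sigma_{\overline k})$, and as a Galois module this is the induced module $\mathrm{Ind}_{\Gamma_K}^{\Gamma_k}\Pic(X_{\overline K})$. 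Also $\overline k[Y]^\times=\overline k^\times$, since $Y$ is proper and geometrically connected.

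\emph{Step 2: Cohomology.} Over a number field $\H^3(k,\mathbb G_m)=0$, so Hochschild--Serre gives isomorphisms $\Br_1(Y)/\Br_0\cong\H^1(k,\Pic Y_{\overline k})$ and $\Br_1(X)/\Br_0\cong\H^1(K,\Pic X_{\overline K})$. By Shapiro's lemma the right-hand sides are isomorphic. Consequently, both groups are abstractly isomorphic.

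\emph{Step 3 (main obstacle): compatibility.} We must check that $\bar N$ is exactly the Shapiro isomorphism under these identifications; abstract isomorphism alone does not give surjectivity of $\bar N$. The Shapiro isomorphism $\H^1(K,M)\to\H^1(k,\mathrm{Ind}M)$ is the composite of the map induced by the $\Gamma_K$-inclusion $M\hookrightarrow\mathrm{Ind}M$ with corestriction. So the check reduces to two facts. First, $p^*\colon\Pic(X_{\overline K})\to\Pic(Y_{\overline k})$ is the inclusion of the identity-embedding summand, which is clear because $p$ over $\overline K$ is the projection onto that factor. Second, the Hochschild--Serre edge maps are functorial for pullback along $p$ and commute with corestriction from $K$ to $k$. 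I would verify the second fact by using the description of $\Br_1/\Br_0$ through $\H^1(\Pic)$ obtained from the spectral sequence of the complex $\tau_{\le1}R\pi_*\mathbb G_m$, which is compatible with corestriction because $\pi_*$ commutes with $\R_{K/k}$-type pushforward. Once this diagram commutes, $\bar N$ is an isomorphism, and the theorem follows as explained in the first paragraph.
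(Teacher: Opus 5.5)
Your proof is correct, but it takes a genuinely different route from the paper's.

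\textbf{The paper's route.} The paper never compares Brauer groups directly. It uses descent theory \cite[Theorem 6.1.2 (a)]{Sko} to write $X(\A_K)^{\Br_\lambda}$ as the union of $f(W(\A_K))$ over torsors $f\colon W\to X$ of type $\lambda$. It then shows (Lemma \ref{commu}, with Shapiro's lemma on $\H^1(K,S)$) that Weil restriction is a bijection from torsors of type $\lambda$ to torsors of type $\lambda'$, compatible with $\Phi$. Finally it specializes to universal torsors under the dual torus $T$. That last step is where torsion-freeness enters, together with \cite[Proposition 1.7]{Sk} for $\Pic(\overline{\R_{K/k}X})$.

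\textbf{Your route.} You bypass torsors entirely. You take the corestriction map $N$ with $\langle N\alpha,\Phi(x)\rangle=\langle\alpha,x\rangle$. You then show that $\bar N\colon \Br_1(X)/\Br_0(X)\to\Br_1(\R_{K/k}X)/\Br_0(\R_{K/k}X)$ is an isomorphism by matching it with the Shapiro isomorphism on $\H^1(\,\cdot\,,\Pic)$. The vanishing $\H^3(K,\mathbb{G}_m)=0$ gives surjectivity of $\Br_1(X)\to\H^1(K,\Pic(\overline{X}))$.

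\textbf{The one step needing more.} The compatibility of the Hochschild--Serre map with corestriction in Step 3 needs more than you wrote. Your stated reason is loose, but the claim is standard:
\begin{enumerate}[(i)]
\item $\Br_1$ of $\R_{K/k}X$, resp.\ of its base change to $K$, is $\H^2$ of $\Gamma_k$, resp.\ $\Gamma_K$, with coefficients in $\tau_{\le1}R\Gamma(\overline{\R_{K/k}X},\mathbb{G}_m)$.
\item The trace for the finite \'etale map $(\R_{K/k}X)\times_kK\to\R_{K/k}X$ becomes group-theoretic corestriction for $\Gamma_K\subset\Gamma_k$.
\item Corestriction is natural in the coefficient complex, so it commutes with the map to $\H^1(\,\cdot\,,\Pic)$.
\end{enumerate}

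\textbf{What each route buys.} The paper's argument yields the finer statement for every $\Br_\lambda$, plus a bijection of universal torsors, which is useful for descent. Yours yields a group-level statement without any descent theory: an explicit isomorphism of algebraic Brauer groups modulo constants that respects the pairing. Moreover, Step 1 only uses the vanishing of $\Pic^0$ of $\overline{X}$. So your argument works verbatim whenever $\Pic(\overline{X})$ is finitely generated, with torsion allowed.
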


\section{Notation and preliminaries}
Let $k$ be a number field and $\overline{k}$  a fixed separable closure of $k$. We write $\Gamma_k=\Gal(\overline{k}/k)$. In this paper, a variety $V$ over $k$ means a geometrically integral and separated scheme of finite type over $k$. We denote by $k(V)$ the function field of $V$ and we write $\overline{V}$ for $V\times_{k}\overline{k}$.
For a variety $V$ over $k$, we recall that
$$\Br(V)=\H_{\text{\'et}}^2(V,\Bbb{G}_m),\ \ \Br_{0}(V)=\textup{Im}(\Br(k)\rightarrow \Br({V})),\ \
\Br_{1}(V)=\textup{Ker}(\Br(V)\rightarrow \Br(\overline{V})).$$

Let $\pi_{1}(V)$ be the \'etale fundamental group of $V$ and we denote by $\pi_{1}^{\textup{ab}}(V)$ the abelianization of $\pi_{1}(V)$, i.e., the quotient by the closure of the commutator subgroup.

For a variety $V$ over $k$, recall that
\begin{equation}
   V({\mathbf A}_k)^{\Br}=\{(P_v)_{v \in \Omega_k}\in V({\mathbf A}_k): \sum_{v \in  \Omega_k}\inv_v(\alpha(P_v))=0, \ \forall \alpha \in \Br(V)\}
\end{equation}
is the so-called Brauer--Manin set, and
\begin{equation}
   V({\mathbf A}_k)^{\Br_{1}}=\{(P_v)_{v \in \Omega_k}\in V({\mathbf A}_k): \sum_{v \in  \Omega_k}\inv_v(\alpha(P_v))=0, \ \forall \alpha \in \Br_{1}(V)\}
\end{equation}
is the so-called algebraic Brauer--Manin set.
Class field theory implies that $V(k)\subset V({\mathbf A}_k)^{\Br}\subset V({\mathbf A}_k)^{\Br_{1}}$.

\section{Varieties with trivial abelianized fundamental groups}
In this section we will give a proof of Theorem \ref{t0}.
\begin{theorem}\label{t1}
Let $K$ be a finite field extension of $k$ and $X$ a smooth quasi-projective variety over $K$. Assume that
$\pi_{1}^{\textup{ab}}(\overline{X})$ is trivial.
Then Question \ref{q} has a positive answer for $X$, i.e., $\Phi(X(\mathbf{A}_{K})^{\Br})=(\R_{K/k}X)(\mathbf{A}_{k})^{\Br}$.
\end{theorem}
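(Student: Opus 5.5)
The inclusion $(\R_{K/k}X)(\mathbf{A}_{k})^{\Br}\subset \Phi(X(\mathbf{A}_{K})^{\Br})$ is already known by \cite[Corollary 3.2]{CL}, so I would only prove the reverse inclusion. The plan is to show that, modulo constant classes, every element of $\Br(\R_{K/k}X)$ is a corestriction of a pulled-back class from $\Br(X)$. Such classes are automatically orthogonal to $\Phi(X(\mathbf{A}_K)^{\Br})$.

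\textbf{Step 1: the corestriction map.} Let $W=\R_{K/k}X$ and let $p\colon W\times_k K\to X$ be the counit of the adjunction defining the Weil restriction. Define
$$\theta=\mathrm{cores}_{K/k}\circ p^*\colon \Br(X)\to \Br(W).$$
By the standard compatibility of corestriction with local invariants, for $(P_w)\in X(\mathbf{A}_K)$ one has
$$\sum_{v}\inv_v\bigl(\theta(\alpha)(\Phi(P)_v)\bigr)=\sum_{w}\inv_w\bigl(\alpha(P_w)\bigr).$$
Hence $\Phi(X(\mathbf{A}_K)^{\Br})$ is orthogonal to $\theta(\Br(X))+\Br_0(W)$. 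It therefore suffices to prove
$$\Br(W)=\theta(\Br(X))+\Br_0(W).$$

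\textbf{Step 2: geometry of $\overline{W}$.} Over $\overline{k}$ we have $\overline{W}\cong\prod_{\sigma}X^{\sigma}_{\overline{k}}$, the product running over the embeddings $\sigma\colon K\to\overline{k}$. The hypothesis $\pi_1^{\rm ab}(\overline X)=0$ gives $\H^1_{\et}(\overline X,\Z/n)=0$ for all $n$. Consequently $\mathrm{Pic}^0$ is trivial, $\Pic(\overline X)$ is torsion-free, and $\overline{k}[\overline X]^\times=\overline{k}^\times$; the last point holds because nonconstant units would yield nontrivial $\mu_n$-torsors.

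I would then use Künneth-type results for products (Skorobogatov--Zarhin) to obtain
$$\Pic(\overline W)=\bigoplus_\sigma \Pic(X^\sigma_{\overline k}),\qquad \Br(\overline W)=\bigoplus_\sigma\Br(X^\sigma_{\overline k}).$$
For the Brauer group, the mixed terms are of the form $\Hom$ between torsion of $\mathrm{NS}$ or Albanese-type groups, and all of these vanish here. As Galois modules these are the induced modules $\mathrm{Ind}_{\Gamma_K}^{\Gamma_k}\Pic(\overline X)$ and $\mathrm{Ind}_{\Gamma_K}^{\Gamma_k}\Br(\overline X)$.

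\textbf{Step 3: comparison via filtrations.} I would filter $\Br$ by $\Br_0\subset\Br_1\subset\Br$ on both $X$ and $W$ and compare the graded pieces.

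For the algebraic part, Hochschild--Serre together with $\H^3(k,\mathbb{G}_m)=0$ gives $\Br_1(W)/\Br_0(W)\cong\H^1(k,\Pic\overline W)$. By Shapiro's lemma this equals $\H^1(K,\Pic\overline X)\cong\Br_1(X)/\Br_0(X)$. The same argument over $K$ is used for the last isomorphism.

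For the transcendental part, $\Br(W)/\Br_1(W)$ is the kernel of the differential $\Br(\overline W)^{\Gamma_k}\to \H^2(k,\Pic\overline W)$. Shapiro identifies $\Br(\overline W)^{\Gamma_k}=\Br(\overline X)^{\Gamma_K}$ and $\H^2(k,\Pic\overline W)=\H^2(K,\Pic\overline X)$. I then need to check that $\theta$ induces exactly these Shapiro identifications on both graded pieces. With that, a five-lemma argument gives that $\theta$ induces a surjection $\Br(X)\to\Br(W)/\Br_0(W)$, which finishes the proof.

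The main obstacle is this compatibility: verifying that $\mathrm{cores}\circ p^*$ agrees with the Shapiro isomorphisms, and that the Hochschild--Serre differentials for $W$ over $k$ and for $X$ over $K$ correspond under it. I would check this via the explicit description of $p$ over $\overline{k}$ as the projection onto the factor indexed by the given embedding, combined with the standard formula realising the Shapiro isomorphism as corestriction composed with restriction to the stabiliser.
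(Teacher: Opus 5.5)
Your argument is correct in outline, but it follows a genuinely different route from the paper's. The paper never analyses $\Br(\R_{K/k}X)$. It uses $\H^1_{\et}(\overline X,\overline G)=0$ to get $\H^1_{\et}(X,G)=\H^1_{\et}(K,G)$ for every finite abelian $K$-group $G$. Hence every such torsor has a twist with a section, and $X(\mathbf{A}_K)^{\fab,\Br}=X(\mathbf{A}_K)^{\Br}$. The missing inclusion then comes from Cao--Liang's Proposition 4.1, $\Phi(X(\mathbf{A}_K)^{\fab,\Br})\subset(\R_{K/k}X)(\mathbf{A}_k)^{\Br}$.

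You instead prove a stronger, purely cohomological statement: $\theta=\mathrm{cores}\circ p^*$ induces an isomorphism $\Br(X)/\Br_0(X)\cong\Br(\R_{K/k}X)/\Br_0(\R_{K/k}X)$. This gives both inclusions at once, so you would not even need Cao--Liang's Corollary 3.2. It also shows what the hypothesis is for: it kills the mixed K\"unneth terms of $\Br(\overline W)$ (e.g.\ those coming from $\H^1\otimes\H^1$), which no class $\theta(\alpha)$ can reach. The paper's route is shorter and needs no geometric input beyond $\H^1_{\et}(\overline X,\overline G)=0$, but it outsources the real work to Cao--Liang.

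Two points need tightening.

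\textbf{Non-projective $X$.} The Skorobogatov--Zarhin product formulas are stated for smooth projective varieties, whereas $X$ here is only quasi-projective. For $\Pic(\overline W)=\bigoplus_\sigma\Pic(X^\sigma_{\overline k})$, use a smooth compactification $\tilde X$. Since $\pi_1(\overline X)\to\pi_1(\overline{\tilde X})$ is onto, $\tilde X$ still has trivial $\pi_1^{\rm ab}$, hence trivial Picard variety and Albanese. So $\Pic$ of a product of such compactifications has no correspondence term, and you then divide out by the boundary divisors. The decomposition of $\Br(\overline W)$ follows from the Kummer sequence together with the \'etale K\"unneth formula with finite coefficients in characteristic $0$, using $\H^0=\Z/n$ and $\H^1=0$ on each factor.

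\textbf{The compatibility you flag as the main obstacle.} This is automatic once $\theta$ is viewed as a morphism of Hochschild--Serre spectral sequences. $p^*$ is one by functoriality. Corestriction along $\pi\colon W\times_kK\to W$ is induced by the norm $\pi_*\mathbb{G}_m\to\mathbb{G}_m$. Via Shapiro's lemma, the spectral sequence of $W\times_kK$ over $K$ is identified with that of $W$ over $k$ with coefficients in $\pi_*\mathbb{G}_m$. On $E_2^{p,q}$ the composite is Shapiro's isomorphism for $q=1,2$, and corestriction for $q=0$, which sends $\Br_0(X)$ into $\Br_0(W)$. So it commutes with $d_2$. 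Since $\H^3(k,\mathbb{G}_m)=\H^3(K,\mathbb{G}_m)=0$, it is an isomorphism on $\Br/\Br_1$ and on $\Br_1/\Br_0$.
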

\begin{proof}
Let $G$ be a finite abelian group scheme over $K$. Note that Hochschild--Serre spectral sequence gives the following exact sequence of cohomology groups
\begin{equation}\label{2.2}
    0 \to \H^{1}_{\et}(K,\H^{0}_{\et}(\overline{X}, \overline{G})) \to \H^{1}_{\et}(X,G)\to \H^{1}_{\et}(\overline{X}, \overline{G})^{\Gamma_K},
\end{equation}
where $\H^{1}_{\et}(\overline{X}, \overline{G})=0$ by assumption on  $\pi_{1}^{\textup{ab}}(\overline{X})$ and \cite[\S XI.5]{SGA1}. As $X$ is geometrically integral and $G$ is a finite group scheme over $K$, we have $\H^{0}_{\et}(\overline{X}, \overline{G})=G(\overline{K})$. Hence we obtain
\begin{equation}\label{2.3}
\H^{1}_{\et}(X,G)=\H^{1}_{\et}(K, G)
\end{equation}
by (\ref{2.2}).
Recall that $$X(\mathbf{A}_{K})^{\fab,\Br}=\bigcap\limits_{\substack{\text{$G$ finite abelian}\\ \text{all $G$-torsors }f\colon Y\to X}}\ \  \bigcup \limits_{[\sigma]\in \H^{1}_{\et}(K,G)} f_{\sigma}(Y^{\sigma}(\mathbf{A}_{K})^{\Br}).$$

For any $G$-torsor $f:Y\rightarrow X$, we see that there exists $[\tau]\in \H^{1}_{\et}(K,G)$ such that $Y^{\tau}$ is trivial by (\ref{2.3}). This implies that the map $f_{\tau} \colon Y^{\tau}\to X$ has a section. Thus we get that
$$f_{\tau}(Y^{\tau}(\mathbf{A}_{K})^{\Br})=X(\mathbf{A}_{K})^{\Br}.$$ So we have $$\bigcup\limits_{[\sigma]\in \H^{1}_{\et}(K,G)} f_{\sigma}(Y^{\sigma}(\mathbf{A}_{K})^{\Br})=X(\mathbf{A}_{K})^{\Br},$$ and then $X(\mathbf{A}_{K})^{\fab,\Br}=X(\mathbf{A}_{K})^{\Br}$.

On one hand, we see $(\R_{K/k}X)(\mathbf{A}_{k})^{\Br}\subset \Phi(X(\mathbf{A}_{K})^{\Br})$ by \cite[Corollary 3.2]{CL}.
On the other hand, we find $$\Phi(X(\mathbf{A}_{K})^{\Br})=\Phi(X(\mathbf{A}_{K})^{\fab,\Br})\subset (\R_{K/k}X)(\mathbf{A}_{k})^{\Br}$$
by \cite[Proposition 4.1]{CL}.
\end{proof}
\begin{corollary}
  Let $K$ be a finite field extension of $k$ and $X$ a smooth projective  variety over $K$ such that
$\pi_{1}^{\textup{ab}}(\overline{X})$ is trivial. Then the existences of Brauer--Manin obstruction to Hasse principle (resp. weak approximation) on $X$ and on $\R_{K/k}X$ are equivalent to each other.
\end{corollary}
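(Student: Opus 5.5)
The corollary should follow almost immediately from Theorem \ref{t1}, because the identification $\Phi$ is compatible with every piece of data entering the two notions of obstruction. My plan is to transport both the Hasse principle statement and the weak approximation statement along $\Phi$.

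First I will record the elementary compatibilities of $\Phi$.

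\begin{itemize}
\item By the universal property of Weil restriction, $\Phi$ restricts to a bijection $X(K)\to(\R_{K/k}X)(k)$.
\item We have $(\R_{K/k}X)(\mathbf{A}_k)=\prod'_{v\in\Omega_k}\prod_{w\mid v}X(K_w)$, since $(\R_{K/k}X)(k_v)=X(K\otimes_k k_v)=\prod_{w\mid v}X(K_w)$.
\item Because $X$ is projective, so is $\R_{K/k}X$, and the adelic spaces are just products over all places. Under this identification $\Phi$ is a homeomorphism for the product topologies.
\end{itemize}

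The statement only makes sense for $X$ smooth and projective, which is the hypothesis we are given. Theorem \ref{t1} then gives $\Phi(X(\mathbf{A}_K)^{\Br})=(\R_{K/k}X)(\mathbf{A}_k)^{\Br}$.

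For the Hasse principle, the obstruction on $X$ means $X(\mathbf{A}_K)\neq\emptyset$ and $X(\mathbf{A}_K)^{\Br}=\emptyset$. Since $\Phi$ is a bijection preserving both sets, this holds if and only if $(\R_{K/k}X)(\mathbf{A}_k)\neq\emptyset$ and $(\R_{K/k}X)(\mathbf{A}_k)^{\Br}=\emptyset$.

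For weak approximation, the Brauer--Manin obstruction on $X$ means $\overline{X(K)}\neq X(\mathbf{A}_K)$. Since $\overline{X(K)}\subset X(\mathbf{A}_K)^{\Br}$ always holds, this can be phrased as a failure of one of two equalities: either $X(\mathbf{A}_K)^{\Br}\neq X(\mathbf{A}_K)$, or $\overline{X(K)}\neq X(\mathbf{A}_K)^{\Br}$, depending on the convention. Either way, a homeomorphism that preserves rational points, adelic points and Brauer--Manin sets carries closures to closures. Hence each condition for $X$ is equivalent to the same condition for $\R_{K/k}X$.

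The only point needing real care is the topological one: the product (or restricted product) topology on $X(\mathbf{A}_K)$ must agree with that on $(\R_{K/k}X)(\mathbf{A}_k)$. I expect this to be the main obstacle, though a mild one. Projectivity makes it immediate, since both sides are plain products of the $X(K_w)$, regrouped according to the places $v$ of $k$ lying under each $w$.
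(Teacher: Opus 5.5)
Your proof is correct and is essentially the paper's argument: $\Phi$ identifies $X(K)$, $X(\mathbf{A}_K)^{\Br}$ and $X(\mathbf{A}_K)$ with $(\R_{K/k}X)(k)$, $(\R_{K/k}X)(\mathbf{A}_k)^{\Br}$ and $(\R_{K/k}X)(\mathbf{A}_k)$ respectively, the middle identification being Theorem~\ref{t1}, and both equivalences follow from this. Your homeomorphism check is only needed for the variant involving $\overline{X(K)}$; for the standard definitions, namely $X(\mathbf{A}_K)\neq\emptyset=X(\mathbf{A}_K)^{\Br}$ for the Hasse principle and $X(\mathbf{A}_K)^{\Br}\neq X(\mathbf{A}_K)$ for weak approximation, these set-theoretic equalities suffice. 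Also note that $\overline{X(K)}\neq X(\mathbf{A}_K)$ is the failure of weak approximation itself, not a Brauer--Manin obstruction to it.
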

\begin{proof}
It follows from the following commutative diagram
\begin{equation*}
    \xymatrix{
     \Phi(X(K))  \ar@{=}[r]\ar[d] &(\R_{K/k}X) (k)\ar[d]\\
     \Phi(X(\mathbf A_{K})^{\Br})    \ar@{=}[r]\ar[d]  &  (\R_{K/k}X)(\mathbf A_{k})^{\Br} \ar[d] \\
     \Phi(X(\mathbf A_{K}))  \ar@{=}[r] &  (\R_{K/k}X)(\mathbf A_{k}).
    }
\end{equation*}
\end{proof}

\begin{remark}
Note that for a smooth projective geometrically integral variety $X$ over a number field $k$,  $\pi_{1}^{\textup{ab}}(\overline{X})$ is trivial iff $\Pic(\overline{X})$ is torsion-free. This can be seen as follows. By Kummer sequence we have $\H^{1}_{\et}(\overline{X},\mu_{n})=\Pic(\overline{X})[n]$. Moreover, $\H^{1}_{\et}(\overline{X},\mu_{n})=\Hom_{cts}(\pi_{1}^{\textup{ab}}(\overline{X}),\Z/n\Z)$ by \cite[\S XI.5]{SGA1}. Hence $\Pic(\overline{X})[n]=\Hom_{cts}(\pi_{1}^{\textup{ab}}(\overline{X}),\Z/n\Z)$.  Thus we get $\Pic(\overline{X})_{\textup{tor}}=\Hom_{cts}(\pi_{1}^{\textup{ab}}(\overline{X}),\Q/\Z)=\Hom_{cts}(\pi_{1}^{\textup{ab}}(\overline{X}),\Bbb{R}/\Z) $ (note that  every continuous homomorphism $\phi: \pi_{1}^{\textup{ab}}(\overline{X}) \to \Bbb{R}/\Z$ has finite image). So we win by Pontryagin duality (cf. \cite[Theorem 1.1.11]{Neu}).
\end{remark}
\begin{proposition}\label{p1}
 Let $X$ and $Y$ be birationally equivalent smooth projective varieties over $K$. Assume that  $\Br(X)/\Br_{0}(X)$ is finite. Then Question \ref{q} has a positive answer for $X$ if and only if it has a positive answer for $Y$.
\end{proposition}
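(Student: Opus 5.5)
The plan is to use birational invariance of the Brauer group on both sides: for $X,Y$ over $K$, and also for the Weil restrictions $\R_{K/k}X$ and $\R_{K/k}Y$ over $k$. Points will be moved into a common open subset by a density argument. By symmetry it suffices to show that a positive answer for $Y$ implies one for $X$. Note that $\Br(Y)/\Br_0(Y)$ is also finite, since $\Br(X)\cong\Br(Y)$.

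Since the inclusion $(\R_{K/k}X)(\mathbf{A}_{k})^{\Br}\subset\Phi(X(\mathbf{A}_{K})^{\Br})$ always holds by \cite[Corollary 3.2]{CL}, only the reverse inclusion needs proof.

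\textbf{Setup.} Fix a dense open $U\subset X$ together with an open immersion $U\hookrightarrow Y$ realising the birational equivalence. Since $X,Y$ are smooth projective, $\Br(X)=\Br(Y)$ as subgroups of $\Br(U)$ (purity / unramified Brauer group). Similarly, $\R_{K/k}X$ and $\R_{K/k}Y$ are smooth projective and geometrically integral over $k$. They contain the common dense open $\R_{K/k}U$, hence $\Br(\R_{K/k}X)=\Br(\R_{K/k}Y)$ inside $\Br(\R_{K/k}U)$. By properness, $X(\mathbf{A}_K)=\prod_v X(K_v)$, and likewise for $Y$ and the Weil restrictions.

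\textbf{Step 1 (finitely many relevant places).} Choose representatives $\beta_1,\dots,\beta_r$ of $\Br(X)/\Br_0(X)$. Spreading out over an integral model and using properness, there is a finite set $S$ of places of $K$ such that each $\beta_i$ evaluates to $0$ on all of $X(K_v)$ for $v\notin S$. Each $\beta_i$ is also locally constant on $X(K_v)$. Hence membership in $X(\mathbf{A}_K)^{\Br}$ depends only on the components at $v\in S$, and it is an open condition there; the $\Br_0$ part pairs trivially by reciprocity.

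\textbf{Step 2 (approximate into $U$).} Let $P\in X(\mathbf{A}_K)^{\Br}$ and fix $\alpha\in\Br(\R_{K/k}X)$. The same argument applied to $\alpha$ on $\R_{K/k}X$ gives a finite set $T_\alpha$ of places of $k$. Outside $T_\alpha$, $\alpha$ vanishes identically on $(\R_{K/k}X)(k_v)=\prod_{w\mid v}X(K_w)$, and on $T_\alpha$ it is locally constant. Since $X$ is smooth, $U(K_w)$ is dense in $X(K_w)$ by the implicit function theorem. We then choose $P'$ as follows:
\begin{itemize}
\item $P'_w\in U(K_w)$ for every place $w$ of $K$;
\item $P'_w$ is so close to $P_w$, for $w\in S$ or $w$ above $T_\alpha$, that the $\beta_i$-values and the value of $\alpha$ at $\Phi(P')_v$ are unchanged.
\end{itemize}
Then $P'\in X(\mathbf{A}_K)^{\Br}$, and $\alpha(\Phi(P))$ and $\alpha(\Phi(P'))$ have the same sum of invariants.

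\textbf{Step 3 (transport).} View $P'$ as $Q\in\prod_w U(K_w)\subset Y(\mathbf{A}_K)$. Because $\Br(Y)=\Br(X)$ restrict to the same classes on $U$, we get $Q\in Y(\mathbf{A}_K)^{\Br}$. The hypothesis for $Y$ gives $\Phi(Q)\in(\R_{K/k}Y)(\mathbf{A}_k)^{\Br}$. Every component of $\Phi(Q)$ lies in $(\R_{K/k}U)(k_v)$, and $\alpha$ corresponds to an element of $\Br(\R_{K/k}Y)$ with the same restriction to $\R_{K/k}U$. Hence $\sum_v\inv_v\alpha(\Phi(P')_v)=0$. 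By Step 2 the same holds for $\Phi(P)$. Since $\alpha$ was arbitrary, $\Phi(P)\in(\R_{K/k}X)(\mathbf{A}_k)^{\Br}$.

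\textbf{Main obstacle.} The delicate point is that $\Br(\R_{K/k}X)/\Br_0$ need not be finite, so one cannot approximate uniformly for all $\alpha$. This is handled by letting $P'$ depend on $\alpha$, as in Step 2. The finiteness of $\Br(X)/\Br_0(X)$ is exactly what guarantees that each such $P'$ remains in $X(\mathbf{A}_K)^{\Br}$.
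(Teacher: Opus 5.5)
Your proof is correct and follows essentially the paper's route. Both arguments use birational invariance of $\Br$ for $X,Y$ and for $\R_{K/k}X,\R_{K/k}Y$ through the common open $U$ (resp.\ $\R_{K/k}U$), approximation into $U$-points inside the Brauer--Manin set via finiteness of $\Br/\Br_0$, and closedness of the Brauer--Manin set of the Weil restriction, which your one-$\alpha$-at-a-time argument makes explicit. The one real difference is that properness lets you work in $\prod_w U(K_w)\subset Y(\mathbf{A}_K)$ instead of $U(\mathbf{A}_K)$, so you avoid the paper's appeal to Harari's formal lemma and get a more self-contained argument.
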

\begin{proof}
We denote briefly $V':=\R_{K/k}V$ for a variety $V$ over $K$. By \cite[Proposition 3.7.10]{CS}, we have
$$\Br(X)=\Br(Y), \ \ \Br(X')=\Br(Y').$$
It suffices to show
$$\Phi(X(\mathbf{A}_{K})^{\Br(X)})=X'(\mathbf{A}_{k})^{\Br(X')}\Longrightarrow \Phi(Y(\mathbf{A}_{K})^{\Br(Y)})=Y'(\mathbf{A}_{k})^{\Br(Y')}.$$
Note that there are open subsets $U\subset X$ and $E\subset Y$ such that $U\cong V$, hence we identify $U$ with an open subset of $Y$.  Since $\Phi(X(\mathbf{A}_{K})^{\Br(X)})=X'(\mathbf{A}_{k})^{\Br(X')}$, one concludes that
\begin{equation}\label{diag}
  U'(\mathbf{A}_{k})^{\Br(X')}=\Phi(U(\mathbf{A}_{K})^{\Br(X)})
\end{equation}
 by the following commutative diagram
\begin{equation*}
    \xymatrix{
     U'(\mathbf A_{k})  \ar@{=}[r]\ar[d] & \Phi(U(\mathbf A_K))\ar[d]\\
     X'(\mathbf A_{k})   \ar@{=}[r] & \Phi(X(\mathbf A_{K})).
    }
\end{equation*}
By (\ref{diag}) and \cite[Corollary 3.2]{CL}, we have  the following commutative diagram
\begin{equation*}
    \xymatrix{
     U'(\mathbf A_{k})^{\Br(Y')}  \ar@{=}[r]\ar[d] & \Phi(U(\mathbf A_K)^{\Br(Y)})\ar[d]\\
     Y'(\mathbf A_{k})^{\Br(Y')}    \ar[r] & \Phi(Y(\mathbf A_{K})^{\Br(Y)}).
    }
\end{equation*}
Since $\Br(Y)/\Br_0(Y)(=\Br(X)/\Br_0 (X))$ is finite, one concludes that $U(\mathbf A_{K})^{\Br(Y)}$ is dense in $Y(\mathbf A_K)^{\Br(Y)}$ by Harari's formal lemma
(see \cite[Th\'eor\`eme 1.4]{C}). Note that $Y'(\mathbf A_{k})^{\Br(Y')}$
is a closed subset in $Y(\mathbf A_{K})^{\Br(Y)}$  by \cite[Proposition 5.14]{Brian} and \cite[Proposition 13.3.1 (iv)]{CS},
hence $$\Phi(Y(\mathbf{A}_{K})^{\Br(Y)})=Y'(\mathbf{A}_{k})^{\Br(Y')}.$$
\end{proof}
\begin{remark}
One anonymous referee kindly pointed out to us that the finiteness of $\Br(X)/\Br_{0}(X)$ implies $\pi_{1}^{\textup{ab}}(\overline{X})=0$, and showed us the following arithmetic proof of this fact in the case $X(\mathbf{A}_{K})^{\Br(X)}\ne \emptyset$.

If $\pi_{1}^{\textup{ab}}(\overline{X})\ne 0$, we may assume $\H^{1}_{\et}(\overline{X}, \mu_n)\ne 0$. Since $X(\mathbf{A}_{K})^{\Br(X)}\ne \emptyset$, the universal torsor of $n$-torsion $f: W \to X$ exists by \cite[Corollary 3.6]{DS}. This is a torsor under a finite abelian group scheme $G$ over $K$ and we take an integral model $f: \mathcal{W}\to \mathcal{X}$ of this torsor. By \cite[Theorem 1.1]{CDX}, we have
$$X(\mathbf{A}_{K})^{\Br}\subset \bigcup \limits_{[\sigma]\in \H^{1}_{\et}(K,G)} f_{\sigma}(W^{\sigma}(\mathbf{A}_{K})).$$
Since $\Br(X)/\Br_0(X)$ is finite, the set $X(\mathbf{A}_{K})^{\Br}$ contains a compact open subset
$$\prod_{v \in S}Z(K_v)\times \prod_{v \notin S}\mathcal{X}(\mathcal{O}_v)$$ with $S$ finite. Note that the set
$$I=\{[\sigma] \in \H^{1}_{\et}(K,G):f_{\sigma}(W^{\sigma}(\mathbf{A}_{K}))\cap (\prod_{v \in S}Z(K_v)\times \prod_{v \notin S}\mathcal{X}(\mathcal{O}_v))\ne \emptyset\}$$
is finite by \cite[Proposition 6.3]{CDX}. Let $K_f$ be the algebraic closure of $K$ in the extension $K(W)^{gal}/K(X)$, where $K(W)^{gal}$ is the Galois closure of $K(W)/K(X)$. We take a finite field extension $L/K_f$ such that for any $\tau \in I$ the image of $[\tau]$ under the map $\H^{1}_{\et}(K,G)\to \H^{1}_{\et}(L,G)$ is trivial. Then $f: \mathcal{W}(\mathcal{O}_v)\to \mathcal{X}(\mathcal{O}_v)$ must be surjective for almost all $v\in J$, where $J=\{v \in \Omega_K: \text{v splits completely in L}\}$ (note that $[W]=[W^\tau]$ in $\H^{1}_{\et}(K_v,G)$ for $v\in J$). This is a contradiction by \cite[Proposition 3.5.2]{Se}.
\end{remark}
\begin{remark}
We do not know if Proposition \ref{p1} still holds in general when $\Br(X)/\Br_{0}(X)$ is infinite.
\end{remark}

\begin{remark}
By the purity theorem for Brauer group, one can easily check that if Question \ref{q} has a positive answer for $X$, then it also has a positive answer for open subsets of $X$ with complement of codimension $\ge 2$.
\end{remark}

\section{Comparison for Brauer--Manin sets under certain typical subgroups}
We refer to \cite[chap. VIII-X]{SGA} for definition and properties of algebraic groups of multiplicative type.
Recall for a number field $k$ with $\Gamma_k=\Gal(\overline{k}/k)$, there is a duality of categories between algebraic groups of multiplicative type over $k$ and discrete $\Gamma_k$-modules of finite type:
$$S\longmapsto \widehat{S}=\textup{Hom}_{\text{$\overline{k}$-gr}}(\overline{S},\mathbb{G}_{m,\overline{k}}).$$
Now let $K\subset \overline{k}$ be a finite field extension of $k$ with $\Gamma_K=\Gal(\overline{k}/K)$, $S$ an algebraic group of
multiplicative type over $K$.
\begin{lemma}\label{l}
There is a canonical isomorphism of $\mathbb{Z}[\Gamma_k]$-modules between $\widehat{\R_{K/k}S}$ and $\widehat{S}\otimes_{\mathbb{Z}[\Gamma_K]}\mathbb{Z}[\Gamma_k]$.
\end{lemma}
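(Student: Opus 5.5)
The plan is to compute the geometric side of $\R_{K/k}S$ explicitly and then read off the character module. Base change to $\overline{k}$ gives a canonical isomorphism
\[
(\R_{K/k}S)\times_k\overline{k}\;\cong\;\prod_{\sigma\in \Hom_k(K,\overline{k})} S\times_{K,\sigma}\overline{k}.
\]
It comes from $K\otimes_k\overline{k}\cong\prod_{\sigma}\overline{k}$ and the compatibility of Weil restriction with base change:
\[
(\R_{K/k}S)_{\overline{k}}=\R_{K\otimes_k\overline{k}/\overline{k}}(S\times_K (K\otimes_k\overline{k})).
\]
Under this isomorphism, $\gamma\in\Gamma_k$ acts on $\overline{k}$-points by sending the $\sigma$-component to the $\gamma\sigma$-component, combined with $\gamma$ acting on coordinates. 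This is the standard descent description, and I would verify it on $R$-points via the universal property $\Mor_k(Y,\R_{K/k}X)=\Mor_K(Y\times_kK,X)$.

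Taking characters then gives
\[
\widehat{\R_{K/k}S}=\bigoplus_{\sigma}\widehat{S^\sigma}.
\]
Here $\widehat{S^\sigma}$ is $\widehat S$ transported by a lift $\tilde\sigma\in\Gamma_k$ of $\sigma$, where $\sigma=\tilde\sigma|_K$ and the inclusion $K\subset\overline{k}$ is fixed. Choose such lifts as coset representatives of $\Gamma_k/\Gamma_K$. Then, as a $\Gamma_k$-module,
\[
\widehat{\R_{K/k}S}\cong\bigoplus_{\tilde\sigma\in\Gamma_k/\Gamma_K}\tilde\sigma\otimes\widehat S,
\]
with $\gamma$ permuting the summands and acting through $\Gamma_K$ via the cocycle $\gamma\tilde\sigma=\tilde\sigma'h$ with $h\in\Gamma_K$. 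This is exactly the induced module $\mathbb{Z}[\Gamma_k]\otimes_{\mathbb{Z}[\Gamma_K]}\widehat S$. Because $\Gamma_K$ has finite index, the induced and coinduced modules agree, so this can equally be written in the paper's order $\widehat S\otimes_{\mathbb{Z}[\Gamma_K]}\mathbb{Z}[\Gamma_k]$, with the right/left structures made compatible by $g\mapsto g^{-1}$.

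A cleaner, choice-free route would be via Yoneda. For a $\overline{k}$-group $T$ of multiplicative type, $\Hom_k$ and Weil restriction are adjoint to base change: $\Hom_{k\text{-gr}}(T,\R_{K/k}S)=\Hom_{K\text{-gr}}(T_K,S)$. Dually, on character modules restriction from $\Gamma_k$ to $\Gamma_K$ corresponds to base change. Its adjoint, on the side where $\widehat{\phantom{S}}$ is contravariant, is induction $\mathbb{Z}[\Gamma_k]\otimes_{\mathbb{Z}[\Gamma_K]}-$, that is, the left adjoint of restriction. Since $S\mapsto\widehat S$ is an anti-equivalence, the right adjoint $\R_{K/k}$ of base change corresponds to the left adjoint of restriction. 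This gives the canonical isomorphism at once. I would present this adjunction argument as the proof and use the explicit computation above only as a check.

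The main obstacle is justifying the adjunction at the level of group schemes: homomorphisms $T\to\R_{K/k}S$ must correspond to homomorphisms $T_K\to S$. This follows from the functorial description of $\R_{K/k}$ applied to $T$ and $T\times T$, so the group law is respected. I also need to check that $\R_{K/k}S$ is again of multiplicative type, which is clear after base change to $\overline{k}$ by the product decomposition. Finally, the left/right module conventions require care to match the paper's formulation.
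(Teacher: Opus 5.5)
Your proposal is correct, but your preferred argument obtains bijectivity differently from the paper. The paper uses the adjunction only once, at the identity of $\R_{K/k}S$. This gives the counit $\R_{K/k}S\times_kK\to S$ and hence a $\Gamma_K$-map $\widehat S\to\widehat{\R_{K/k}S}$. The paper extends this $\Gamma_k$-linearly to $\widehat S\otimes_{\mathbb{Z}[\Gamma_K]}\mathbb{Z}[\Gamma_k]$, then checks bijectivity by hand after forgetting the Galois action, using $\overline{\R_{K/k}S}\cong\overline{S}^{[K:k]}$. You instead use the adjunction against every multiplicative-type $k$-group $T$. Under the anti-equivalence, both $\widehat{\R_{K/k}(\cdot)}$ and induction are then left adjoints of restriction, and the isomorphism follows by Yoneda. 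The product decomposition is needed only to know that $\R_{K/k}S$ is of multiplicative type, so that Yoneda can be run inside the anti-equivalent category. Your route makes bijectivity and naturality in $S$ formal, rather than a ``one can easily check''. The paper's route is more concrete: it exhibits the map as $\chi\mapsto\chi\circ q$, with $q\colon\overline{\R_{K/k}S}\to\overline S$ the canonical projection, and this explicit form is what Lemma \ref{commu} relies on (``by Lemma \ref{l} and its proof''). The two isomorphisms agree, since yours corresponds under Frobenius reciprocity to the unit of your adjunction, which is dual to the paper's counit. State this explicitly if you want your version to feed into Lemma \ref{commu}. Two small corrections. In the adjunction, $T$ should be a $k$-group, not a $\overline{k}$-group. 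Also, rewriting $\mathbb{Z}[\Gamma_k]\otimes_{\mathbb{Z}[\Gamma_K]}\widehat S$ as $\widehat S\otimes_{\mathbb{Z}[\Gamma_K]}\mathbb{Z}[\Gamma_k]$ is purely the left/right conversion via $g\mapsto g^{-1}$ and has nothing to do with induced $=$ coinduced; finite index is what makes your sum over $\Gamma_k/\Gamma_K$ finite.
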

\begin{proof}This is a well-known lemma, we give its proof here for convenience of the reader. First we have the following isomorphism
$$\Hom_K(\R_{K/k}S\times_kK, S)\cong \Hom_k(\R_{K/k}S, \R_{K/k}S).$$
The identity on $\R_{K/k}S$ induces a $K$-morphism $\R_{K/k}S\times_kK \to S$, hence a homomorphism of $\mathbb{Z}[\Gamma_K]$-module $\widehat{S}\to \widehat{\R_{K/k}S}$. Then we get a homomorphism of $\mathbb{Z}[\Gamma_k]$-modules $\widehat{S}\otimes_{\mathbb{Z}[\Gamma_K]}\mathbb{Z}[\Gamma_k]\longrightarrow \widehat{\R_{K/k}S}$ in an obvious manner.

 Note that $\overline{\R_{K/k}S}$ is isomorphic to the $[K:k]$-multiple fiber product of $\overline{S}$  over $\overline{k}$, as algebraic groups over $\overline{k}$. One can easily check that the above homomorphism is bijective by forgetting the Galois group action.
\end{proof}

We refer to \cite[Section 2.3]{Sko} for definition and properties of torsors and their types. Now let $X$ be a smooth projective variety over $K$, and $S$ an algebraic
group of multiplicative type over $K$.    Note that there is a diagram with exact rows
\begin{equation}\label{COMM}
\xymatrix{
0\ar[r] & \H^{1}_{\et}(K,S) \ar[r]& \H^{1}_{\et}(X,S) \ar[r]& \Hom_{\mathbb{Z}[\Gamma_K]}(\widehat{S},\Pic(\overline{X}))
\\
0\ar[r] & \H^{1}_{\et}(k,\R_{K/k}S) \ar[r]& \H^{1}_{\et}(\R_{K/k}X,\R_{K/k}S) \ar[r]& \Hom_{\mathbb{Z}[\Gamma_k]}(\widehat{\R_{K/k}S},\Pic(\overline{\R_{K/k}X})),
\ar"1,2";"2,2"_{\cong}^{\R_{K/k}}
\ar"1,3";"2,3"^{\R_{K/k}}
\ar"1,4";"2,4"^{-\otimes_{\mathbb{Z}[\Gamma_K]}\mathbb{Z}[\Gamma_k]}
}
\end{equation}
where the left vertical map is an isomorphism by Shapiro's lemma. We will show that the diagram is commutative by the forthcoming Lemma \ref{commu}. Hence if $\lambda\in \Hom_{\mathbb{Z}[\Gamma_K]}(\widehat{S}, \Pic(\overline{X}))$ has image $\lambda'\in \Hom_{\mathbb{Z}[\Gamma_k]}(\widehat{\R_{K/k}S},\Pic(\overline{\R_{K/k}X}))$, the map $\H^{1}_{\et}(X,S)\rightarrow \H^{1}_{\et}(\R_{K/k}X,\R_{K/k}S)$ induces a bijection between torsors of type $\lambda$ and torsors of type $\lambda'$.

\begin{lemma}\label{commu}
The diagram (\ref{COMM}) is commutative.
\end{lemma}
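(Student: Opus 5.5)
The plan is to check the two squares of (\ref{COMM}) separately, after making every map explicit. The middle vertical map sends a torsor $f\colon Y\to X$ under $S$ to $\R_{K/k}f\colon \R_{K/k}Y\to \R_{K/k}X$. This is a torsor under $\R_{K/k}S$, because Weil restriction preserves products and fppf-local triviality: trivialize over some étale cover $X_i\to X$, then use the fact that $\R_{K/k}$ of a cover is again a cover, after base change to $\overline{k}$ where everything becomes a product. The left vertical map is the same construction applied to torsors over $\operatorname{Spec}K$, and it is the Shapiro isomorphism. The right vertical map sends $\lambda$ to $\lambda\otimes\mathbb{Z}[\Gamma_k]$. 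Here we compose with the identification $\Pic(\overline{\R_{K/k}X})=\Pic(\overline{X}^{\,n})$, where $n=[K:k]$. Using $\overline{\R_{K/k}X}\cong\prod_{\sigma\in\Gamma_k/\Gamma_K}{}^{\sigma}\overline{X}$, this group contains $\bigoplus_\sigma \Pic({}^\sigma\overline{X})$ via the pullbacks $p_\sigma^*$. The inclusion $\Pic(\overline{X})\hookrightarrow\Pic(\overline{\R_{K/k}X})$, $L\mapsto p_1^*L$, is $\Gamma_K$-equivariant, and $\lambda'$ is defined as the induced $\Gamma_k$-map on $\widehat{S}\otimes_{\mathbb{Z}[\Gamma_K]}\mathbb{Z}[\Gamma_k]\cong\widehat{\R_{K/k}S}$ (Lemma \ref{l}).

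The left square commutes essentially by definition. The horizontal maps are pullback along the structure morphisms, and $\R_{K/k}$ commutes with fibre products, so $\R_{K/k}(X\times_K T)=\R_{K/k}X\times_k\R_{K/k}T$. Hence for a $K$-torsor $T$ we get $\R_{K/k}(T\times_K X)\cong \R_{K/k}T\times_k \R_{K/k}X$ as $\R_{K/k}S$-torsors.

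For the right square, recall that the type of a torsor $Y\to X$ under $S$ is the map $\chi\mapsto \chi_*[\overline{Y}]\in \H^1(\overline{X},\mathbb{G}_m)=\Pic(\overline{X})$, i.e.\ the class of the $\mathbb{G}_m$-torsor obtained by pushing out along the character $\chi$. Over $\overline{k}$ we have
\[
\overline{\R_{K/k}Y}=\prod_\sigma {}^\sigma\overline{Y}\longrightarrow \prod_\sigma{}^\sigma\overline{X},
\]
a torsor under $\prod_\sigma{}^\sigma\overline{S}$. By Lemma \ref{l}, the character group $\widehat{\R_{K/k}S}$ is $\bigoplus_\sigma \widehat{{}^\sigma S}$. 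For a character $\sigma\otimes\chi$, i.e.\ $\chi$ placed in the $\sigma$-component, the pushout of the product torsor is $p_\sigma^*({}^\sigma(\chi_*\overline{Y}))$, because pushing out a product torsor along a character that factors through one projection gives the pullback of the corresponding factor's pushout. Its class is exactly $\sigma\cdot p_1^*(\lambda(\chi))$, which is $\lambda'(\chi\otimes\sigma)$. Since both sides are additive and such elements generate, the square commutes. In this step I would verify that the isomorphism of Lemma \ref{l} is literally the one coming from the decomposition $\overline{\R_{K/k}S}\cong\prod_\sigma{}^\sigma\overline{S}$, which it is by its construction via the adjunction counit $\R_{K/k}S\times_kK\to S$ (projection to the identity factor).

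I expect the main obstacle to be this bookkeeping of the Galois actions and conjugates. One must check that the identification $\overline{\R_{K/k}Y}\cong\prod_\sigma{}^\sigma\overline{Y}$ is functorial in $Y$ and compatible with the $\Gamma_k$-actions on both sides, so that the $\sigma$-component identifications on characters and on Picard groups match. I would handle this by fixing coset representatives and using the counit $\R_{K/k}Y\times_kK\to Y$ uniformly for $Y$, $X$ and $S$. The sign and convention issues in the definition of type should then agree automatically, since the same convention is used on both rows.
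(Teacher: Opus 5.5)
Your proposal is correct and follows essentially the paper's route. The left square comes from $\R_{K/k}(T\times_K X)=\R_{K/k}T\times_k\R_{K/k}X$, and the right square from computing the pushout of $\overline{\R_{K/k}Y}$ along characters. The paper encodes the latter as $q_*[\overline{\R_{K/k}Y}]=p^*[\overline{Y}]$ in $\H^1_{\et}(\overline{\R_{K/k}X},\overline{S})$. The paper checks only characters of the form $\chi\circ q$ (your $\sigma=1$ component) and lets $\Gamma_k$-equivariance of both sides handle the rest. That spares you the conjugate-by-conjugate bookkeeping you flag as the main obstacle.
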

\begin{proof}
\begin{enumerate}[(i)]
\item The left square commutes since Weil restriction is compatible with fiber products, i.e., $$\R_{K/k}(T\times_{K}X)=\R_{K/k}T\times_{k}\R_{K/k}X$$ for any $S$-torsor $T\rightarrow K$.
\item Let $q: \overline{\R_{K/k}S}\to \overline{S}$ and $p: \overline{\R_{K/k}X}\to \overline{X}$ be the canonical maps. By Lemma \ref{l} and its proof, the right square commutes
if and only if the following diagram commutes
\begin{equation}\label{cc}
\xymatrix@C=1ex{
 \small{\H^{1}_{\et}(X,S)\times \textup{Hom}_{\text{$\overline{k}$-gr}}(\overline{S},\mathbb{G}_{m,\overline{k}})}\ar[r]& \small{\H^{1}_{\et}(\overline{X},\overline{S})\times \textup{Hom}_{\text{$\overline{k}$-gr}}(\overline{S},\mathbb{G}_{m,\overline{k}})} \ar[r]& \small{\H^{1}_{\et}(\overline{X},\mathbb{G}_{m,\overline{k}})}
\\
\small{\H^{1}_{\et}(\R_{K/k}X,\R_{K/k}S) \times \textup{Hom}_{\text{$\overline{k}$-gr}}(\overline{\R_{K/k}S},\mathbb{G}_{m,\overline{k}})}\ar[r]&\small{ \H^{1}_{\et}(\overline{\R_{K/k}X},\overline{\R_{K/k}S})\times \textup{Hom}_{\text{$\overline{k}$-gr}}(\overline{\R_{K/k}S},\mathbb{G}_{m,\overline{k}}) }\ar[r]&\small{ \H^{1}_{\et}(\overline{\R_{K/k}X},\mathbb{G}_{m,\overline{k}})},
\ar"1,1";"2,1"_{(\R_{K/k}, \Tilde{q})}
\ar"1,3";"2,3"_{p^{*}}
}
\end{equation}
where the map $ \Tilde{q}:\textup{Hom}_{\text{$\overline{k}$-gr}}(\overline{S},\mathbb{G}_{m,\overline{k}})\rightarrow\textup{Hom}_{\text{$\overline{k}$-gr}}(\overline{\R_{K/k}S},\mathbb{G}_{m,\overline{k}})$ is induced by compositing with $q:\overline{\R_{K/k}S}\rightarrow \overline{S}$.

By diagram chasing and functoriality, the commutativity of (\ref{cc}) follows from the following commutative diagram
\begin{equation}
\xymatrix{
\H^{1}_{\et}(X,S)\ar[r]& \H^{1}_{\et}(\overline{X}, \overline{S}) \ar[r]^{p^{\ast}}& \H^{1}_{\et}(\overline{\R_{K/k}X}, \overline{S})\\
\H^{1}_{\et}(\R_{K/k}X,\R_{K/k}S)\ar[r]& \H^{1}_{\et}(\overline{\R_{K/k}X},\overline{\R_{K/k}S}),
\ar"1,1";"2,1"^{\R_{K/k}}
\ar"2,2";"1,3"^{q_{*}}
}
\end{equation}
which can be obtained by straightforward computation of torsors.
\end{enumerate}
\end{proof}
\begin{corollary}\label{cor}
Let $X$ be a smooth projective variety over $K$, also assume that $\Pic(\overline{X})$ is a torsion-free abelian group. Let  $T$ be the dual torus of  $\Pic(\overline{X})$.
Then $\Pic(\overline{\R_{K/k}X})$ is a $\Gamma_k$-module of finite type, and its dual is just $\R_{K/k}T$. Moreover, the map $\H^{1}_{\et}(X,T)\rightarrow \H^{1}_{\et}(\R_{K/k}X,\R_{K/k}T)$ induces a bijection between universal torsors of $X$ and $\R_{K/k}X$.
\end{corollary}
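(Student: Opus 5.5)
The plan is to first compute $\Pic(\overline{\R_{K/k}X})$ as a $\Gamma_k$-module and then compare types via the commutative diagram (\ref{COMM}) of Lemma \ref{commu}.

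\textbf{Geometric Picard group.} Over $\overline{k}$ we have $\overline{\R_{K/k}X}\cong\prod_{\sigma}X^{\sigma}$, where $\sigma$ runs over the $K$-embeddings $K\hookrightarrow\overline{k}$, i.e.\ over the cosets $\Gamma_k/\Gamma_K$. Each factor $\overline{X}^{\sigma}$ is smooth and projective. Since $\Pic(\overline{X})$ is torsion-free, the Remark following Theorem \ref{t1} gives $\pi_1^{\textup{ab}}(\overline{X})=0$. In particular $\H^1(\overline{X},\mathcal{O})=0$, because the Picard variety vanishes when $\Pic$ is torsion-free and $\Pic^0$ is divisible.

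For smooth projective varieties $V,W$ over $\overline{k}$ with $\Pic^0(V)=0$, pullback along the projections gives $\Pic(V\times W)=\Pic(V)\oplus\Pic(W)$. This follows from the seesaw argument: a line bundle trivial on all fibres $\{v\}\times W$ is pulled back from $V$, and the rigidity of $\Pic_W$ handles the rest, using that the Picard scheme of $W$ has $\Pic^0$ trivial and discrete Néron–Severi, so a family parametrised by connected $V$ is constant. By induction,
$$\Pic(\overline{\R_{K/k}X})=\bigoplus_{\sigma\in\Gamma_k/\Gamma_K}\Pic(\overline{X}^{\sigma}).$$
Next I will check that $\Gamma_k$ permutes the summands compatibly with its action on the cosets. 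This identifies the module with the induced module $\Pic(\overline{X})\otimes_{\Z[\Gamma_K]}\Z[\Gamma_k]$, with the isomorphism given by the pullbacks $p_\sigma^*$. In particular it is a $\Gamma_k$-module of finite type and torsion-free. By Lemma \ref{l} applied to $S=T$, its dual torus is $\R_{K/k}T$.

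\textbf{Universal torsors.} A universal torsor of $X$ is a $T$-torsor whose type $\lambda\colon\widehat T=\Pic(\overline X)\to\Pic(\overline X)$ is the identity. By the commutative diagram (\ref{COMM}) and Lemma \ref{commu}, the Weil restriction of such a torsor has type $\lambda'=\lambda\otimes_{\Z[\Gamma_K]}\Z[\Gamma_k]$. Under the identification above, $\lambda'$ is the identity of $\Pic(\overline{\R_{K/k}X})$. Here I must confirm that the right vertical map of (\ref{COMM}), defined through $p^*$ and $q$, matches the isomorphism built from the $p_\sigma^*$; this amounts to unwinding the proof of Lemma \ref{l}.

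Since the left vertical arrow of (\ref{COMM}) is a bijection by Shapiro's lemma, and torsors of a fixed type form a principal homogeneous space under $\H^1(K,T)$ (respectively $\H^1(k,\R_{K/k}T)$) when nonempty, the map $\R_{K/k}$ is a bijection between the two sets of universal torsors whenever both are nonempty. If $X$ has a universal torsor, then so does $\R_{K/k}X$, namely its Weil restriction.

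\textbf{Main obstacle.} The remaining case is the converse: if $\R_{K/k}X$ has a universal torsor, I need $X$ to have one. I would use the obstruction class of Colliot-Thélène–Sansuc in $\H^2(K,T)$, respectively $\H^2(k,\R_{K/k}T)$. These classes should correspond under the Shapiro isomorphism $\H^2(k,\R_{K/k}T)\cong\H^2(K,T)$, by naturality of the Hochschild–Serre spectral sequence under Weil restriction. This compatibility, together with the Künneth step for $\Pic$, is the step I expect to be the main technical point.
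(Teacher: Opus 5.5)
Your proposal is correct and follows the paper's skeleton. You identify $\Pic(\overline{\R_{K/k}X})$ with $\Pic(\overline{X})\otimes_{\Z[\Gamma_K]}\Z[\Gamma_k]$, apply Lemma \ref{l} to obtain the dual torus $\R_{K/k}T$, and read off universal torsors from (\ref{COMM}) with $S=T$. It differs from the paper in two places.

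\textbf{The Picard computation.} The paper simply cites \cite[Proposition 1.7]{Sk}; you prove it by seesaw plus discreteness of $\Pic_{\overline X}$. That works, but one justification is wrong as stated. ``Torsion-free and divisible'' does not force $\Pic^0(\overline X)=0$, since a nonzero $\Q$-vector space is both. The correct reason is that $\Pic^0(\overline X)$ is the group of $\overline k$-points of an abelian variety, and a positive-dimensional abelian variety has nonzero $n$-torsion over $\overline k$. Also, your seesaw lemma puts the hypothesis on $V$ but uses it on $W$. This is harmless, since every factor $\overline X^{\sigma}$ satisfies it.

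\textbf{The universal-torsor bijection.} You correctly observe that the three-term rows of (\ref{COMM}) give two things only: injectivity on torsors of a fixed type, and surjectivity once a torsor of type $\lambda$ exists. The paper's appeal to the diagram passes over the case where $X$ has no universal torsor but $\R_{K/k}X$ might. Your $\H^2$ argument closes this gap, and it goes through as follows.
\begin{enumerate}[(i)]
\item The map $\H^q(X,S)\to\H^q(\R_{K/k}X,\R_{K/k}S)$ is pullback along the counit $\varepsilon\colon(\R_{K/k}X)_K\to X$, followed by the isomorphism $\H^q((\R_{K/k}X)_K,S)\cong\H^q(\R_{K/k}X,\R_{K/k}S)$, which holds because finite pushforward is exact.
\item The Hochschild--Serre spectral sequence of $\R_{K/k}X$ over $k$ with coefficients $\R_{K/k}S$ is identified, via Shapiro on $E_2$, with that of $(\R_{K/k}X)_K$ over $K$ with coefficients $S$.
\item $\varepsilon^*$ is a morphism of spectral sequences and is the identity on $E_2^{2,0}=\H^2(K,S)$. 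Both varieties are proper and geometrically integral, so the geometric $\H^0$ of $S$ is $S(\overline k)$ in each case.
\end{enumerate}
Hence $d_2(\lambda')$ is the Shapiro image of $d_2(\lambda)$, so one vanishes if and only if the other does. With this step filled in, your argument is more complete than the paper's.
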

\begin{proof}
According to \cite[Definition 2.3.3]{Sko} a $T$-torsor on $X$ is called a universal torsor if its type is the identity in $\textup{Hom}_{\mathbb{Z}[\Gamma_K]}(\widehat{T},\Pic(\overline{X}))$.
By \cite[Proposition 1.7]{Sk}, we can obtain the following isomorphism of $\mathbb{Z}[\Gamma_k]$-module
$$\Pic(\overline{\R_{K/k}X})\cong \Pic(\overline{X})\otimes_{\mathbb{Z}[\Gamma_K]}\mathbb{Z}[\Gamma_k],$$
then the first assertion comes from Lemma \ref{l}. And the second assertion follows from the commutative diagram above by taking $S=T$.
\end{proof}
Let $r: \Br_1(X)\to \H^{1}_{\et}(K,\Pic(\overline{X}))$
be the canonical map from the Hochschild--Serre spectral sequence $\H^{p}_{\et}(K, \H^{q}(\overline{X}, \Bbb{G}_m))\Rightarrow \H^{p+q}_{\et}(X, \Bbb{G}_m)$ (see \cite[Corollary 2.3.9]{Sko}). Let $M$ be a $\Gamma_K$-module of finite type, and $\lambda: M \to \Pic(\overline{X})$ a homomorphism of $\Gamma_K$-modules. Recall that
$$\Br_{\lambda}(X):=r^{-1}\lambda_{\ast}(\H^{1}_{\et}(K, M))\subset \Br_{1}(X).$$
We define $X(\mathbf{A}_{K})^{\Br_{\lambda}}\subset X(\mathbf{A}_{K})$ as the set of adelic points orthogonal to $\Br_{\lambda}$ with respect to the Brauer--Manin pairing. The main result of this section is the following.
\begin{theorem}\label{T}
Let $X$ be a smooth projective variety over $K$, and $S$ an algebraic group of multiplicative
type over $K$. If $\lambda\in \Hom_{\mathbb{Z}[\Gamma_K]}(\widehat{S}, \Pic(\overline{X}))$ has image $\lambda'\in \Hom_{\mathbb{Z}[\Gamma_k]}(\widehat{\R_{K/k}S},\Pic(\overline{\R_{K/k}X}))$, then $\Phi(X(\mathbf{A}_{K})^{\Br_{\lambda}})=(\R_{K/k}X)(\mathbf{A}_{k})^{\Br_{\lambda'}}$.

Moreover if  $\Pic(\overline{X})$ is a torsion-free abelian group, then $\Phi(X(\mathbf{A}_{K})^{\Br_{1}})=(\R_{K/k}X)(\mathbf{A}_{k})^{\Br_{1}}$.
\end{theorem}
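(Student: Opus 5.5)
Our plan is to use descent theory for torsors under groups of multiplicative type, as in Colliot-Th\'el\`ene--Sansuc and Harari--Skorobogatov. The key input is \cite[Theorem 6.1.2]{Sko}, which gives a description of $\Br_\lambda$-sets as unions of images of adelic points of twisted torsors. For $X$ smooth projective over $K$ with $\lambda\in \Hom_{\mathbb{Z}[\Gamma_K]}(\widehat{S},\Pic(\overline{X}))$, and assuming torsors of type $\lambda$ exist, it states
\[
X(\mathbf{A}_K)^{\Br_\lambda}=\bigcup_{[\sigma]\in \H^1_{\et}(K,S)} f^\sigma\bigl(Y^\sigma(\mathbf{A}_K)\bigr),
\]
where $f\colon Y\to X$ is a torsor of type $\lambda$. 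If no torsor of type $\lambda$ exists, then $X(\mathbf{A}_K)^{\Br_\lambda}=\emptyset$. We will apply this on both sides: to $X$ over $K$, and to $\R_{K/k}X$ over $k$ with the group $\R_{K/k}S$, which is of multiplicative type by Lemma \ref{l}, and with the type $\lambda'$. For the second application we also need $\R_{K/k}X$ to be smooth and projective. It is projective, since it is a Weil restriction of a projective variety along a finite separable extension, and it is geometrically a product of conjugates of $\overline{X}$.

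\textbf{Existence of torsors.} By the diagram (\ref{COMM}) and Lemma \ref{commu}, a torsor of type $\lambda$ on $X$ is sent by $\R_{K/k}$ to one of type $\lambda'$. Conversely, the bijectivity of the left vertical map (Shapiro) shows that the map $\H^1_{\et}(X,S)\to \H^1_{\et}(\R_{K/k}X,\R_{K/k}S)$ induces a bijection between torsors of type $\lambda$ and those of type $\lambda'$, as remarked after (\ref{COMM}). Its surjectivity is the delicate point. I would argue that the obstruction to the existence of a torsor of type $\lambda$ is the class $\partial\lambda\in\H^2(K,S)$. By Shapiro's lemma, $\H^2(K,S)\cong \H^2(k,\R_{K/k}S)$ compatibly with the boundary maps, so $\partial\lambda=0$ if and only if $\partial\lambda'=0$. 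Hence both sides are empty simultaneously, and we may assume a torsor $f\colon Y\to X$ of type $\lambda$ exists. Then $\R_{K/k}f\colon \R_{K/k}Y\to \R_{K/k}X$ has type $\lambda'$.

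\textbf{Comparing the unions.} Shapiro's lemma gives $\H^1(K,S)=\H^1(k,\R_{K/k}S)$, and twisting commutes with Weil restriction: $\R_{K/k}(Y^\sigma)=(\R_{K/k}Y)^{\sigma'}$, where $\sigma'$ corresponds to $\sigma$. Moreover, $\Phi$ for $Y^\sigma$ is compatible with $\Phi$ for $X$ via $f^\sigma$, by the functoriality of (\ref{Iden}). So $\Phi$ carries $f^\sigma(Y^\sigma(\mathbf{A}_K))$ onto the image of $(\R_{K/k}Y)^{\sigma'}(\mathbf{A}_k)$. Taking unions gives the first claim.

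\textbf{The torsion-free case.} Take $S=T$, the dual torus of $\Pic(\overline{X})$, and $\lambda=\mathrm{id}$. Then $\Br_\lambda(X)=\Br_1(X)$. By Corollary \ref{cor}, $\Pic(\overline{\R_{K/k}X})\cong\Pic(\overline{X})\otimes_{\mathbb{Z}[\Gamma_K]}\mathbb{Z}[\Gamma_k]$ is torsion-free with dual torus $\R_{K/k}T$, and $\lambda'$ is the identity. Hence $\Br_{\lambda'}(\R_{K/k}X)=\Br_1(\R_{K/k}X)$, and the second claim follows from the first.

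The main obstacle I expect is verifying that the boundary classes $\partial\lambda$ and $\partial\lambda'$ correspond under Shapiro's isomorphism. This amounts to showing that the Weil-restriction map is compatible with the Hochschild--Serre $d_2$. I would try to deduce it from the elementary obstruction description: the extension class of $0\to\overline{k}^*\to\overline{k}(X)^*\to \mathrm{Div}\to\Pic\to0$ is compatible with induction.
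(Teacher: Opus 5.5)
Your proposal is correct and follows essentially the paper's route: apply \cite[Theorem 6.1.2 (a)]{Sko} to $X$ and to $\R_{K/k}X$, match torsors of type $\lambda$ with torsors of type $\lambda'$ through Weil restriction (Shapiro's lemma on $\H^1$ together with $\R_{K/k}(Y^{\sigma})=(\R_{K/k}Y)^{\sigma'}$), and specialize to $S=T$, $\lambda=\mathrm{id}$ to get the $\Br_1$ statement. Your extra step for the case where no torsor of type $\lambda$ exists covers a point the paper passes over, and the compatibility you flag as the main obstacle follows more cleanly from naturality of the fundamental exact sequence than from comparing divisor sequences: restrict to $K$, pull back along the counit $p\colon \R_{K/k}X\times_k K\to X$, push forward along $q\colon \R_{K/k}S\times_k K\to S$, and use $\lambda'\circ\widehat{q}=p^{*}\circ\lambda$ to get $\partial\lambda=q_{*}\,\textup{res}_{K/k}(\partial\lambda')$, where $q_{*}\circ\textup{res}_{K/k}$ is precisely the Shapiro isomorphism $\H^2(k,\R_{K/k}S)\cong\H^2(K,S)$.
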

\begin{proof}
We denote briefly $V':=\R_{K/k}V$ for a variety $V$ over $K$.
Given an $S$-torsor $f:Y\rightarrow X$ of type $\lambda$, by assumption its Weil restriction $f':Y'\rightarrow X'$ is an $S'$-torsor of type $\lambda'$, and $f(Y(\mathbf{A}_{K}))= f'(Y'(\mathbf{A}_{k}))$ under the  natural identification $\Phi: X(\A_K)\buildrel{=}\over\longrightarrow X'(\A_k)$. Now by Corollary \ref{cor} and the discussion before it, Weil restriction induces a bijection between torsors of type $\lambda$ and torsors of type $\lambda'$. So we have $$\Phi(X(\mathbf{A}_{K})^{\Br_{\lambda}})=\Phi(\bigcup \limits_{\text{type $(Y,f)=\lambda$}} f(Y(\mathbf{A}_{K})))=\bigcup \limits_{\text{type $(Y',f')=\lambda'$}} f'(Y'(\mathbf{A}_{k}))=(\R_{K/k}X)(\mathbf{A}_{k})^{\Br_{\lambda'}},$$
$$\Phi(X(\mathbf{A}_{K})^{\Br_{1}})=\Phi(\bigcup \limits_{\text{type $(Y,f)=\text{Id}$}} f(Y(\mathbf{A}_{K})))=\bigcup \limits_{\text{type $(Y',f')=\text{Id}$}} f'(Y'(\mathbf{A}_{k}))=(\R_{K/k}X)(\mathbf{A}_{k})^{\Br_{1}}$$
by \cite[Theorem 6.1.2 (a)]{Sko}.
\end{proof}
\bf{Acknowledgments}	
\it{We thank Yang Cao and Yongqi Liang for many useful discussions and suggestions. We also thank the referees for their careful scrutiny and valuable comments. This work is supported by National Natural Science Foundation of China No.12071448}.

 \end{document}